\begin{document}

\newtheorem{thm}{Theorem}[section]
\newtheorem{lem}[thm]{Lemma}
\newtheorem{prop}[thm]{Proposition}
\newtheorem{cor}[thm]{Corollary}
\newtheorem{con}[thm]{Conjecture}
\newtheorem{claim}[thm]{Claim}
\newtheorem{obs}[thm]{Observation}
\newtheorem{defn}[thm]{Definition}
\newtheorem{example}[thm]{Example}
\newcommand{\di}{\displaystyle}
\def\dfc{\mathrm{def}}
\def\cF{{\cal F}}
\def\cH{{\cal H}}
\def\cK{{\cal K}}
\def\cC{{\cal C}}
\def\cA{{\cal A}}
\def\cB{{\cal B}}
\def\cP{{\cal P}}
\def\ap{\alpha'}
\def\Frk{F_k^{2r+1}}
\def\nul{\varnothing} 
\def\st{\colon\,}   
\def\MAP#1#2#3{#1\colon\,#2\to#3}
\def\VEC#1#2#3{#1_{#2},\ldots,#1_{#3}}
\def\VECOP#1#2#3#4{#1_{#2}#4\cdots #4 #1_{#3}}
\def\SE#1#2#3{\sum_{#1=#2}^{#3}}  \def\SGE#1#2{\sum_{#1\ge#2}}
\def\PE#1#2#3{\prod_{#1=#2}^{#3}} \def\PGE#1#2{\prod_{#1\ge#2}}
\def\UE#1#2#3{\bigcup_{#1=#2}^{#3}}
\def\FR#1#2{\frac{#1}{#2}}
\def\FL#1{\left\lfloor{#1}\right\rfloor} 
\def\CL#1{\left\lceil{#1}\right\rceil}

\title{Hamiltonicity in Connected Regular Graphs}
\author{
Daniel W. Cranston\thanks{Department of Mathematics and Applied Mathematics,
Virginia Commonwealth University, Richmond, VA, 23284,
\texttt{dcranston@vcu.edu}}
\and
Suil O\thanks{Department of Mathematics, The College of William and Mary,
Williamsburg, VA, 23185, \texttt{so@wm.edu.}}
}

\maketitle

\begin{abstract}
In 1980, Jackson proved that every 2-connected $k$-regular graph 
with at most $3k$ vertices is Hamiltonian. This result has been extended in
several papers. In this note, we determine the minimum number of vertices in a
connected $k$-regular graph that is not Hamiltonian, and we also solve the
analogous problem for Hamiltonian paths.  Further, we characterize the smallest
connected $k$-regular graphs without a Hamiltonian cycle.

\end{abstract}

\section {Introduction}

In 1980, Jackson~\cite{J1} gave a sufficient condition on the number of vertices
in a 2-connected $k$-regular graph for it to be Hamiltonian.
A graph $G$ is {\it $k$-connected} if it has more than $k$ vertices
and every subgraph obtained by deleting fewer than $k$ vertices
is connected. A graph $G$ is {\it Hamiltonian} if it contains a spanning cycle.
For terminology and notation not defined here, we use \cite{W}.

\begin{thm}{\rm (Jackson~\cite{J1})}\label{J1}
Every 2-connected $k$-regular graph on at most $3k$ vertices
is Hamiltonian.
\end{thm}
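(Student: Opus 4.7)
I would argue by contradiction: assume $G$ is a 2-connected $k$-regular graph on $n\le 3k$ vertices that contains no Hamiltonian cycle. Choose a longest cycle $C$ in $G$, fix an orientation of $C$, and let $H$ be a component of $G-V(C)$; the assumption that $C$ is not spanning makes $H$ nonempty, and 2-connectivity ensures $H$ has at least two neighbors on $C$. Let $A=\{a_1,\dots,a_s\}$ be those neighbors, listed in cyclic order along $C$, and write $a_i^+$ (respectively $a_i^-$) for the successor (respectively predecessor) of $a_i$ on $C$.

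The core of the argument is a family of forbidden-configuration lemmas driven by the maximality of $C$. First, I would show that no two vertices of $A^+:=\{a_1^+,\dots,a_s^+\}$ are adjacent: for an edge $a_i^+a_j^+$, take an $a_i$--$a_j$ path $Q$ through $H$ and splice it with the appropriate arcs of $C$ to get a cycle strictly longer than $C$. The same technique rules out edges from $A^+$ into $V(G)\setminus V(C)$, and an analogous statement holds for $A^-$. A second ingredient is to consider a longest path $P$ inside $H$: by the maximality of $P$, the endpoints of $P$ send all their neighbors outside $V(P)$ into $A$, forcing $|A|$ to be large relative to $k-|V(H)|$.

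With these structural facts in hand, the proof reduces to counting. Since $A^+$ is an independent set with no neighbors off $V(C)$, summing the degrees of the vertices in $A^+$ and using $k$-regularity produces a lower bound on $|V(C)|$ on the order of $2k$; the longest-path argument inside $H$ then forces $|V(G)\setminus V(C)|$ to be at least about $k+1-|A|$. Combining the two estimates should yield $n\ge 3k+1$, contradicting $n\le 3k$.

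The hardest part will be the bookkeeping in the rotation/crossover arguments: every ``replacement cycle'' must be carefully verified to be strictly longer than $C$, and the two numerical estimates (for $|V(C)|$ and for $|V(G)\setminus V(C)|$) must be pushed just far enough, in a compatible way, so that their sum exceeds $3k$ without slack. Handling the small cases where $|A|$ is $2$ or $3$, and where the arcs of $C$ between consecutive elements of $A$ are short, is where I would expect to spend the bulk of the effort.
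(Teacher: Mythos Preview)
The paper does not contain a proof of this statement: Theorem~\ref{J1} is quoted as a known result of Jackson~\cite{J1} and used as a black box in the arguments of Section~\ref{results}. So there is nothing in the paper to compare your proposal against, and for the purposes of this note no proof is required---a citation suffices.

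That said, your outline is the right \emph{shape} for Jackson's original argument (longest cycle, successors $A^+$ of the attachment set form an independent set with no neighbours off $C$, a longest-path estimate inside $H$, then a vertex count), but you are underestimating the work needed to push the bound all the way to $3k$. The elementary counting you describe---``$|V(C)|$ is on the order of $2k$'' from the independence of $A^+$, plus ``$|V(H)|\gtrsim k+1-|A|$'' from a longest path in $H$---is essentially the Erd\H{o}s--Hobbs/Bollob\'as--Hobbs level of the problem and falls short of $3k+1$; the two estimates do not combine additively in the way your last paragraph suggests, because the $|A|$ terms do not cancel without further structural input. Jackson's proof in \cite{J1} requires substantially more: iterated rotations (a Woodall-type ``hopping'' argument) to enlarge the set playing the role of $A^+$, together with a careful case analysis on the arcs of $C$, and runs to roughly twenty pages. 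If you actually want to reprove the theorem rather than cite it, the missing ingredient is this strengthened rotation machinery, not just tighter bookkeeping.
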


Theorem~\cite{J1} has been extended in several papers.
Hilbig~\cite{H} extended it to graphs on $3k+3$ vertices with two exceptions. 
Let $P$ denote the Petersen graph, and let $P'$ denote the graph obtained from
$P$ by replacing one vertex $v$ of $P$ by the complete graph $K_3$ and
making each vertex of the $K_3$ adjacent to a distinct neighbor of $v$.

\begin{thm}{\rm (Hilbig~\cite{H})}\label{H}
If G is a 2-connected $k$-regular graph on at most $3k+3$ vertices
and $G \notin \{P, P'\}$, then $G$ is Hamiltonian.
\end{thm}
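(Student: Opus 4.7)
The plan is to argue by contradiction, extending the longest-cycle analysis that underlies Theorem~\ref{J1}. Suppose $G$ is a 2-connected $k$-regular graph on $n \le 3k+3$ vertices that is not Hamiltonian, with $G \notin \{P,P'\}$. By Theorem~\ref{J1} we may assume $n \ge 3k+1$, so $n \in \{3k+1, 3k+2, 3k+3\}$. The cases $k \le 2$ are trivial, so assume $k \ge 3$; then at most three vertices lie outside any longest cycle $C$ of $G$. Set $R := V(G) \setminus V(C)$; the goal is to show $R = \varnothing$.

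First I would gather the standard longest-cycle tools. Fix a cyclic orientation of $C$. For any $v \in R$ and any two neighbors $u, u' \in N_C(v)$, the successors $u^+$ and $u'^+$ on $C$ must be nonadjacent, for otherwise one can reroute through $v$ to obtain a longer cycle; hence $N_C(v)^+$ (and similarly $N_C(v)^-$) is an independent set. By 2-connectivity, each $v \in R$ admits two internally disjoint paths to $C$, and by standard exchange arguments the attachment patterns of distinct vertices of $R$ interleave on $C$ in a constrained way. Using $k$-regularity I would show that each $v \in R$ has essentially all $k$ of its neighbors on $C$, so that $N_C(v)$ together with its successors already occupies at least $2k$ vertices of $C$.

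The heart of the argument is then a tight counting step that combines the independent sets $N_C(v)^{\pm}$ over all $v \in R$ with the $k$-regularity condition applied to vertices of $V(C) \setminus (N_C(v) \cup N_C(v)^+)$, yielding $n \ge 2k + |R| + \varepsilon$ for a small slack $\varepsilon$ that must be tracked carefully. The inequalities are sharp precisely when $n = 3k+3$ or $n = 3k+1$ and the resulting configuration is extremely rigid. I expect the main obstacle to be the last step: showing that these rigid equality configurations correspond exactly to $P$ (arising in the $|R|=1$, $k=3$ analysis) and $P'$ (arising in the $|R|=3$, $k=3$ analysis, with the three outside vertices forming a triangle attached to a near-Petersen structure on $C$). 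This will likely require a dedicated case analysis for $k=3$, where $P$ and $P'$ actually occur, whereas for $k \ge 4$ the bounds should be strict enough to rule out any sporadic exception outright, completing the contradiction.
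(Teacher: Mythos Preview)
The paper does not prove this statement at all: Theorem~\ref{H} is quoted as a known result of Hilbig (a 1986 PhD thesis) and is used only as a black box in the proof of the final theorem. There is therefore no ``paper's own proof'' to compare against; your proposal is an attempt to reprove a cited external result.

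As a strategy, longest-cycle analysis is indeed the standard route to results of this type, and it is plausible that Hilbig's argument runs along these lines. Your placement of the two exceptions is also reasonable: for $k=3$ the Petersen graph has $n=3k+1=10$ and circumference $9$, so $|R|=1$; and $P'$ has $n=3k+3=12$, with the inflated triangle sitting outside a $9$-cycle, so $|R|=3$.

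That said, there is a genuine gap early in your outline. From $n\le 3k+3$ and $k\ge 3$ you assert that at most three vertices lie off a longest cycle $C$. This does not follow: you would need $|C|\ge 3k$, which is essentially the content of Jackson's theorem and is not available a priori for $n>3k$. The elementary longest-cycle bounds you quote (that $N_C(v)^+$ is independent, etc.) give only $|C|\ge 2k$ when some $v\in R$ has all its neighbours on $C$, hence $|R|\le k+3$, not $|R|\le 3$. Bridging this gap is exactly where the hard work lies, and it is why the result occupies a thesis rather than a page: one must control how several off-cycle vertices can share or interleave their attachments on $C$, and the counting is considerably more delicate than ``$n\ge 2k+|R|+\varepsilon$''. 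Your sketch acknowledges that the equality analysis will be intricate, but it underestimates the difficulty of getting $|R|$ small in the first place.
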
 

In Section~\ref{results}, we show that every connected $k$-regular graph on at
most $2k+2$ vertices has no cut-vertex, which implies by Theorem~\ref{J1} that
it is Hamiltonian.  In addition, we characterize connected $k$-regular graphs
on $2k+3$ vertices ($2k+4$ vertices when $k$ is odd) that are non-Hamiltonian.

A {\it Hamiltonian path} is a spanning path. We also solve the analogous
problem for Hamiltonian paths.  
Recall from Theorem~\ref{H} that every 2-connected $k$-regular graph $G$ on at
most $3k+3$ vertices is Hamiltonian, except for when $G\in\{P,P'\}$.
So to show that every connected $k$-regular graph on at most $3k+3$
vertices has a Hamiltonian path, it suffices to investigate $P$, $P'$, and
connected $k$-regular graphs with a cut-vertex.



\section{Maximum Number of Vertices for Hamiltonicity}
\label{results}

\begin{thm}\label{main1}
Every connected $k$-regular graph on at most $2k+2$ vertices is Hamiltonian.
Furthermore, we characterize connected k-regular graphs on $2k+3$ vertices (when
$k$ is even) and $2k+4$ vertices (when $k$ is odd) that are non-Hamiltonian.
\end{thm}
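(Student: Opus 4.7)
The plan is to handle both parts of the theorem via a common structural reduction: analyse the components of $G - v$ for a hypothetical cut-vertex $v$. If such a $v$ exists and $C_1,\dots,C_t$ are the components of $G-v$, then every vertex of $C_i$ has all $k$ of its neighbors inside $C_i \cup \{v\}$, so $|C_i| \ge k$. If $|C_i| = k$, then every vertex of $C_i$ is forced to be adjacent to $v$, which puts all $k$ neighbors of $v$ in $C_i$ and contradicts the existence of any other component. Hence $|C_i| \ge k+1$ for each $i$, giving $|V(G)| \ge 1 + 2(k+1) = 2k + 3$. So a connected $k$-regular graph on at most $2k+2$ vertices has no cut-vertex, and since $2k+2 \le 3k$ for $k \ge 2$ (with the $k \le 1$ cases degenerate), Theorem~\ref{J1} gives Hamiltonicity.

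For the characterization at $n = 2k+3$ ($k$ even) or $n = 2k+4$ ($k$ odd), I split on $2$-connectedness. If $G$ is $2$-connected, Theorem~\ref{J1} applies whenever $n \le 3k$; this fails only when $k = 3$ and $n = 10$, and Theorem~\ref{H} then identifies the Petersen graph as the sole $2$-connected non-Hamiltonian exception. Otherwise $G$ has a cut-vertex $v$, and the counting above forces $t = 2$ with $(|C_1|, |C_2|) = (k+1, k+1)$ when $n = 2k+3$, or $(k+1, k+2)$ when $n = 2k+4$. Letting $d_i$ denote the number of neighbors of $v$ in $C_i$, a degree-sum parity check on $G[C_i \cup \{v\}]$ yields $d_i$ even when $|C_i| = k+1$ and $d_i \equiv k \pmod 2$ when $|C_i| = k+2$; combined with $d_1 + d_2 = k$ and $d_i \ge 1$ (from connectivity), this pins down the allowable pairs $(d_1, d_2)$.

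The structure of each $H_i := G[C_i \cup \{v\}]$ is then determined by counting non-edges vertex by vertex. When $|C_i| = k+1$, every vertex of $C_i$ has a single non-neighbor in $H_i$, and tracing these non-neighbors forces $G[C_i]$ to be $K_{k+1}$ minus a perfect matching on the $d_i$ vertices adjacent to $v$. When $|C_i| = k+2$, every vertex of $C_i$ has exactly two non-neighbors in $H_i$, so $\overline{G[C_i]}$ is a disjoint union of paths and cycles whose $k+2-d_i$ path-endpoints are precisely the non-neighbors of $v$. Each resulting $G$ is automatically non-Hamiltonian, since any graph with a cut-vertex fails to contain a Hamiltonian cycle. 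The main obstacle will be giving a clean, succinct statement of the $|C_i| = k+2$ family, since the complement admits multiple path/cycle configurations that each yield a non-isomorphic $G$; the characterization must enumerate these possibilities without becoming unwieldy.
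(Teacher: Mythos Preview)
Your approach is essentially identical to the paper's: the same cut-vertex count yielding $|V(G)|\ge 2k+3$, the same appeal to Jackson's theorem for Hamiltonicity, and the same description of the extremal families via $K_{k+1}$ minus a matching on the $(k+1)$-vertex side and a complement-of-paths-and-cycles picture on the $(k+2)$-vertex side. You are in fact slightly more careful than the paper: at $k=3$ one has $n=2k+4=10>3k$, so Theorem~\ref{J1} does not cover the $2$-connected case and the Petersen graph must be added to the characterization---the paper's proof asserts that every minimal non-Hamiltonian example has a cut-vertex and thus omits this exception.
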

\begin{proof} Let $G$ be a connected $k$-regular graph on at most $2k+2$
vertices. 
By Theorem~\ref{J1}, it suffices to show that $G$ has no cut-vertex.  Assume to
the contrary that $G$ has a cut-vertex, $v$.  Now $G-v$ has at least two
components, say $O_1$ and $O_2$.  Let $H_i=G[V(O_i)\cup \{v\}]$ for $i\in [2]$.  Since
all vertices in $G$ have degree $k$ and each vertex in $H_i$ except $v$ has its
neighbors in $H_i$, the number of vertices in $H_i$ is at least $k+1$.  If
$|H_i|=k+1$, then $H_i = K_{k+1}$, which contradicts the fact that $v$ is a
cut-vertex.  Thus, each component of $G-v$ has at least $k+1$ vertices,
which implies that $G$ has at least $2k+3$ vertices.  This is a contradiction. 
(Note that by the degree sum formula, $|V(G)|$ is even if $k$ is odd.  Thus, if
$G$ has a cut-vertex and $k$ is odd, then $G$ has at least $2k+4$ vertices.)

Now we characterize the smallest connected $k$-regular graphs that are not
Hamiltonian; these show that the bound $2k+2$ in the theorem is optimal.  
(Our characterization relies on graphs first defined in~\cite{OW1}
and~\cite{OW2}.) As shown above, if a connected $k$-regular graph $G$ is
non-Hamiltonian, then $G$ has at least $2k+3$ vertices if $k$ is even,
and at least $2k+4$ vertices if $k$ is odd.  Specifically, $G$ must have a
cut-vertex, v (for otherwise, it is Hamiltonian by Theorem~\ref{J1}), and each
component of $G-v$ must have at least $k+1$ vertices.  When $k$ is even, the
degree sum formula shows that $v$  must have an even number of neighbors in each
component of $G-v$.  A similar argument works when $k$ is odd.  Thus, the
description below gives a complete characterization of such graphs.
We begin with the case when $k$ is even, and the case when $k$ is odd
is similar.

Let $k=2r$ for $r\ge2$.  
For $2 \le t \le 2r-2$ and $t$ even, let $F_{r,t}$
be a graph on $2r+2$ vertices with one vertex of degree $t$ and the remaining
$2r+1$ vertices of degree $2r$.  We can form such a graph from a
copy of $K_{2r+1}$ by deleting a matching on $t$ vertices, then adding a new
vertex adjacent to the $t$ endpoints of the matching.
Let $F'_{r,t}$ be a graph on $2r+1$ vertices with $t+1$ vertices of degree $2r$
and $2r-t$ vertices of degree $2r-1$.  We form such a graph from a copy
of $K_{2r+1}$ by deleting a matching on $2r-t$ vertices.

Let $\cF_r$ be the family of $2r$-regular graphs obtained from $F_{r,t}$ and
$F'_{r,t}$ by adding edges from the vertex of degree $t$ in $F_{r,t}$ to the
$2r-t$ vertices of degree $2r-1$ in $F'_{r,t}$.  Since each such graph contains
a cut-vertex, the family $\cF_r$ consists entirely of $2r$-regular graphs on
$2k+3$ vertices that are non-Hamiltonian.

Now let $k=2r+1$ for $r\ge 1$. For $2 \le t \le 2r$ and $t$ even, let $H_{r,t}$
be a graph on $2r+3$ vertices with one vertex of degree $t$ and the remaining
$2r+2$ vertices of degree $2r+1$.  As above, to form such a graph, begin with a
copy of $K_{2r+2}$, delete a matching on $t$ vertices, then add a new vertex
adjacent to the $t$ endpoints of the matching.
Let $H'_{r,t}$ be a graph on $2r+3$ vertices with 
$t+2$ vertices of degree $2r+1$ and $2r+1-t$ vertices of degree $2r$.
To form such a graph from a copy of $K_{2r+3}$, we delete the edges of a
spanning subgraph consisting of some nonnegative number of disjoint cycles
and exactly $(t+2)/2$ disjoint paths.  One (but not the only) way to form such
a subgraph, is to take the union of a near perfect matching (on $2r+2$
vertices) and a second disjoint matching on $2r+2-t$ vertices, including the
vertex missed by the first matching.

Let $\cH_r$ be the family of $(2r+1)$-regular graphs on $2k+4$ vertices
obtained from $H_{r,t}$ and $H'_{r,t}$ by adding edges from the vertex of
degree $t$ in $H_{r,t}$ to the $2r+1-t$ vertices of degree $2r$ in $H'_{r,t}$.
Since each such graph contains a cut-vertex, the family $\cH_r$ consists
entirely of $(2r+1)$-regular graphs on $2k+4$ vertices that are
non-Hamiltonian.
\end{proof}

Theorem~\ref{main1} determines a threshold for the order of a connected
$k$-regular graph that guarantees the graph is Hamiltonian.  In fact, for every
positive integer $k\ge 3$ and every even integer $n\ge 2k+4$, we can construct
connected $k$-regular graphs on $n$ vertices that are not Hamiltonian. 
Similarly, for even $k\ge 4$ and odd $n\ge 2k+3$, we can construct connected
$k$-regular graphs on $n$ vertices that are not Hamiltonian.  

Our construction is nearly identical
to that in the proof of Theorem~\ref{main1}.  If $k$ is even, we form $F_{r,t}$
starting from any $k$-regular graph on $n-k-1$ vertices (rather than 
$K_{2r+1}$).  An easy example of such graphs are circulants.  The remainder of
the construction is as before.
If $k$ is odd, we form $H'_{r,t}$ starting from any $k+1$-regular graph on
$n-k-2$ vertices (rather than $K_{2r+3}$); again circulants are an example. 
Thus we have determined exactly those orders $n$ for which a connected
$k$-regular graph on $n$ vertices must be Hamiltonian.

We may also wonder which orders $n$ guarantee that a connected $k$-regular
graph on $n$ vertices must have a Hamiltonian path.  Now we answer this
question; our proof uses Theorem~\ref{J2}. 


\begin{thm}{\rm (\cite{J2})}\label{J2}
If $G$ is 2-connected with at most $3\Delta(G)-2$ vertices,
where $\Delta(G)$ is the maximum degree of $G$,
then $G$ has a cycle containing all vertices of degree $\Delta(G)$.
\end{thm}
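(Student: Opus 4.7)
The plan is to prove Jackson's cycle theorem by an extremal / insertion argument in the spirit of his original $3k$ theorem (Theorem~\ref{J1}). Write $\Delta = \Delta(G)$ and let $D$ be the set of vertices of degree $\Delta$. I would fix a cycle $C$ in $G$ chosen to (i)~contain the maximum number of vertices of $D$, and subject to that (ii)~be as long as possible, and subject to both, (iii)~maximize the number of chords incident to $D \setminus V(C)$. Assume for contradiction that some $v \in D$ lies outside $C$; the goal is to build a new cycle containing $v$ and every $D$-vertex already on $C$, contradicting the choice of $C$.

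First I would exploit $2$-connectedness. By Menger's theorem there exist two internally disjoint $v$-to-$C$ paths $P_1, P_2$ meeting $C$ at distinct vertices $x_1, x_2$; these split $C$ into arcs $A_1, A_2$. The cycles $C_j = P_1 \cup A_j \cup P_2$ $(j=1,2)$ both contain $v$, and between them they cover every $D$-vertex of $C$. I would then try to \emph{reinsert} into $C_j$ any $D$-vertex $w$ dropped by the choice of arc, via the standard chord-insertion move: replace an edge $yz$ of $C_j$ by a two-edge path $y w z$ whenever $wy, wz \in E(G)$. If this is always possible the contradiction is immediate, so one is reduced to analyzing the bad case where no such insertion exists.

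The second step is the neighborhood-counting argument that powers Jackson-type proofs. If no insertion works for $w$, then along $C_j$ the $\Delta$ neighbors of $w$ are cyclically arranged so that for every pair of consecutive neighbors $u, u'$ of $w$, the arc between them must contain at least one ``forbidden'' intermediate vertex --- otherwise swapping $uu'$-arc pieces against the path $u w u'$ would produce a longer cycle or one covering more of $D$. Summing these gap contributions over the $\Delta$ neighbors yields $|V(C_j)| \ge 2\Delta$; together with $w$ itself and the vertices of $V(G) \setminus V(C_j)$ needed to accommodate $P_1, P_2$, this forces $|V(G)| \ge 3\Delta - 1$, contradicting the hypothesis $|V(G)| \le 3\Delta - 2$.

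The main obstacle, I expect, is the insertion/recapture step: when one arc is discarded, some previously captured $D$-vertices are lost, and re-inserting them one by one without destroying earlier insertions requires a careful inductive bookkeeping. Jackson's regular-graph argument sidesteps this because uniform degree gives uniform ``room'' in every gap; in the present weighted setting, one must use the tie-breaker (iii)~together with the observation that vertices of $V(G) \setminus D$ have at most $\Delta - 1$ neighbors, so they can safely play the role of ``spacers'' in gaps without blocking insertions of $D$-vertices. Getting this trade-off tight is what forces the sharp bound $3\Delta(G)-2$.
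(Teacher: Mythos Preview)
The paper does not prove Theorem~\ref{J2} at all: it is quoted from Jackson~\cite{J2} and invoked as a black box in the Hamiltonian-path argument. There is no proof in the paper to compare your proposal against.

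Taken on its own, your sketch is a plausible plan of attack but not a proof, and the counting step does not close as written. You claim that if $w\in D$ cannot be inserted into $C_j$ then the gap argument forces $|V(C_j)|\ge 2\Delta$; but that needs all $\Delta$ neighbours of $w$ to lie on $C_j$, which you have not arranged --- some may sit on the discarded arc $A_{3-j}$ or off $C$ entirely. Even granting $|V(C_j)|\ge 2\Delta$, the leap to $|V(G)|\ge 3\Delta-1$ via ``$w$ itself and the vertices needed to accommodate $P_1,P_2$'' is unjustified: the two Menger paths contribute only a handful of extra vertices, not the roughly $\Delta$ you would need. Finally, you correctly flag the reinsertion step as the crux and then defer it; the tie-breaker~(iii) and the ``spacer'' heuristic for low-degree vertices are suggestive, but nothing here explains why recapturing one lost $D$-vertex does not eject another. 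Jackson's actual proof in~\cite{J2} uses a considerably more elaborate extremal choice and a sequence of path/cycle exchanges tailored to the non-regular setting; what you have is an outline in the right spirit, not yet an argument.
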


\begin{thm}
Every connected $k$-regular graph with at most $3k+3$ vertices
has a Hamiltonian path. 
Furthermore, we construct connected k-regular graphs on $3k+4$ vertices (when
$k\ge 6$ is even) and on $3k+5$ vertices (when $k\ge 5$ is odd) that have no
Hamiltonian path.
\end{thm}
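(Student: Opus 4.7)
The plan is to split into two parts: establish the Hamiltonian path for connected $k$-regular graphs on at most $3k+3$ vertices, then give the construction of counterexamples.

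For existence, the natural case split is on whether $G$ is 2-connected or has a cut-vertex. If $G$ is 2-connected, Theorem~\ref{H} already gives a Hamiltonian cycle (hence a Hamiltonian path) unless $G\in\{P,P'\}$, and for these two 3-regular graphs a Hamiltonian path is verified by inspection (the Petersen graph famously has one; a Hamiltonian path of $P'$ follows from a path of $P$ ending at the replaced vertex, continued through the triangle). If instead $G$ has a cut-vertex $v$, the argument in the proof of Theorem~\ref{main1} shows every component of $G-v$ has at least $k+1$ vertices, so $|V(G)|\le 3k+3$ forces exactly two components $O_1,O_2$. Setting $H_i=G[V(O_i)\cup\{v\}]$, any Hamiltonian path in $G$ crosses $v$ exactly once, so it decomposes into Hamiltonian paths in $H_1$ and $H_2$ both ending at $v$; it therefore suffices to produce such a path in each $H_i$ and concatenate at $v$.

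To do so I would apply Theorem~\ref{J2} to $H_i$. We have $\Delta(H_i)=k$ (every vertex of $O_i$ has degree $k$ in $H_i$) and $|H_i|\le 2k+2\le 3k-2$ for $k\ge 4$. When $H_i$ is 2-connected, Theorem~\ref{J2} gives a cycle $C$ through every degree-$k$ vertex, i.e., through every vertex of $O_i$: if $v\in C$, then $C$ is a Hamiltonian cycle of $H_i$ and deleting an edge of $C$ at $v$ finishes; if $v\notin C$, then $C$ is a Hamiltonian cycle of $O_i$, and traversing $C$ to end at any neighbor of $v$ and then appending $v$ yields the path. The main obstacle is the case when $H_i$ is not 2-connected. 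Since $H_i-v=O_i$ is connected, $v$ cannot be a cut-vertex of $H_i$, so any cut-vertex $u$ lies in $O_i$; I would then analyze $H_i-u$, noting that every component not containing $v$ has each of its vertices with all $k$ neighbors inside that component together with $u$, and hence has size at least $k$. Combined with $|H_i|\le 2k+2$, this forces very restricted configurations, each of which either admits a direct construction of the required path or reduces to a smaller instance handled recursively. The small cases $k\le 3$, where Theorem~\ref{J2} does not apply to $H_i$ of size $2k+2$, are dispatched by separate inspection.

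For the construction, the key observation is that a Hamiltonian path uses a cut-vertex $v$ at most once and so visits at most two components of $G-v$; hence if $G-v$ has three or more components, $G$ has no Hamiltonian path. For even $k\ge 6$ and $n=3k+4$, I build $G$ by attaching three components of size $k+1$ at a common cut-vertex $v$, each component being a copy of $K_{k+1}$ minus a perfect matching on the $d_j$ vertices joined to $v$ (as with $F'_{r,t}$ in Theorem~\ref{main1}). The regularity parity forces each $d_j$ to be even, while connectedness forces each $d_j\ge 1$, so each $d_j\ge 2$; since $d_1+d_2+d_3=k$, this requires $k\ge 6$. For odd $k\ge 5$ and $n=3k+5$, two components have size $k+1$ (forcing $d_j$ even) and one has size $k+2$ (whose analogous parity analysis, as for $H'_{r,t}$, forces $d_j$ odd), and the minimum choice $(2,2,1)$ requires $k\ge 5$. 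In each case $G$ is $k$-regular and connected with $G-v$ having three components, so it has no Hamiltonian path.
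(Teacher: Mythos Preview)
Your outline coincides with the paper's proof: dispose of the 2-connected case via Hilbig, split at a cut-vertex $v$ into $H_1,H_2$ and glue Hamiltonian paths ending at $v$ produced by Theorem~\ref{J2}, and construct the extremal graphs by attaching three pieces at a common cut-vertex (the paper fixes the degree split $(2,2,k-4)$, consistent with your parity analysis and giving the same thresholds $k\ge6$ and $k\ge5$).

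The genuine gap is exactly what you call ``the main obstacle'': Theorem~\ref{J2} needs $H_i$ to be 2-connected, and your sketch (``very restricted configurations \ldots\ handled recursively'') does not supply this. The paper resolves it by counting in $G$ rather than inside $H_i$: if $H_1$ has a cut-vertex $v_1\ (\ne v)$, then $G-\{v,v_1\}$ breaks into at least three pieces---namely $O_2$, a component of $H_1-v_1$ avoiding $v$, and the remainder of the component that contained $v$---of orders at least $k+1$, $k+1$, $k$, forcing $|V(G)|\ge 3k+4$; hence under the hypothesis $|V(G)|\le 3k+3$ each $H_i$ is automatically 2-connected and no case analysis is needed. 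Similarly, the paper does not leave $k=3$ to inspection: in the only problematic sub-case $|V(G)|=12$, $|H_2|=8$ one computes $d_{H_1}(v)=2$, hence $d_{H_2}(v)=1$, and Theorem~\ref{J2} is applied to $O_2$ (seven vertices, maximum degree $3$) instead of $H_2$, producing a Hamiltonian path of $O_2$ ending at the unique neighbour of $v$.
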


\begin{proof}
Let $G$ be a connected $k$-regular graph with at most $3k+3$ vertices. If $G$
is 2-connected, then by Theorem~\ref{H}, $G$ has a Hamiltonian cycle, or $G \in
\{P, P'\}$.  We can easily see that the Petersen graph has a Hamiltonian path,
and every such path extends to a Hamiltonian path in $P'$.  So every
counterexample $G$ to the theorem must have a cut-vertex.

Assume that $G$ has a cut-vertex, $v$.  If $G-v$ has at least three components,
then $G$ cannot have a Hamiltonian path. But by the proof of
Theorem~\ref{main1}, each component of $G-v$ has at least $k+1$ vertices,
so $G$ has at least $3k+4$ vertices.
%
%
%
Furthermore, if $k$ is odd, then by the degree sum formula, $G$ has at least
$3k+5$ vertices.

So assume that $G-v$ has only two components, say $O_1$ and $O_2$.
Let $H_1=G[V(O_1)\cup \{v\}]$ and $H_2=G[V(O_2)\cup \{v\}]$. 
%
Note that all the vertices in $H_1$ have degree $k$ except for vertex $v$.
If $H_1$ has a cut-vertex $v_1$, then the 3 components of $G\setminus\{v,v_1\}$
have orders at least $k+1$, $k+1$, and $k$, so $G$ has order at least
$2+2(k+1)+k= 3k+4$ (and at least $3k+5$ when $k$ is odd).  Thus $H_1$ (and
similarly, $H_2$) is 2-connected.
Now if $H_1$ has at most $3k-2$ vertices,
then by Theorem~\ref{J2}, $H_1$ has a cycle containing all vertices of $H_1$
except $v$; thus $H_1$ has a Hamiltonian path with endpoint $v$.
The same is true for $H_2$.
Now since $|V(O_i)|\ge k+1$ (for both $i\in [2]$), if $G$ has at most $4k-1$
vertices, then $G$ has a Hamiltonian path, since both $H_1$ and $H_2$ have
Hamiltonian paths with endpoint $v$.  
%
%

For $k \ge 4$, we have $4k-1 \ge 3k+3$, so any $k$-regular graph with at most
$3k+3$ vertices has a Hamiltonian path. 
For $k=3$, we may assume that $|V(O_1)| \le |V(O_2)|$. 
The same argument holds as above unless $|V(G)|=3k+3=12$ and $|V(O_2)|=7$, in
which case $|V(H_2)|=8>3k-2$.  Now we have $|V(H_1)|=5$, so $d_{H_1}(v)=2$, and
thus $d_{H_2}(v)=1$.  Now we apply Theorem~\ref{J2} to $O_2$ to get a cycle
through all vertices of $O_2$ except the neighbor of $v$.  This cycle yields a
Hamiltonian path in $O_2$ that ends at a neighbor of $v$.  Thus, $G$ has a
Hamiltonian path.



For the ``Furthermore'' part, we construct a $(3k+4)$-vertex connected
$k$-regular graph without a Hamiltonian path when $k$ is even, and a
$(3k+5)$-vertex connected $k$-regular graph without a Hamiltonian path when $k$
is odd.

Let $k$ be even and at least $6$. 
Let $F_1$ be the graph obtained from $K_{k+1}$ by deleting an edge,
and let $F_2$ be the graph obtained from $K_{k+1}$ by deleting a matching on 
$k-4$ vertices.  We form a connected $k$-regular graph $F$ from two copies
of $F_1$ and one copy of $F_2$ by adding one new vertex $v$ and adding edges
from $v$ to all $k$ vertices of degree $k-1$.

Now let $k$ be odd and at least $5$. 
Let $H_1$ be $F_1$ above. 
Let $H_2$ be a graph on $k+2$ vertices, with 6 vertices of degree $k$ and $k-4$
vertices of degree $k-1$.  This is exactly $H'_{r,t}$ from the proof of
Theorem~\ref{main1}, with $t=4$ and $r=(k-1)/2$.
Now we form $H$ from two copies of $H_1$ and one copy of $H_2$ by adding one new
vertex $v$ and adding edges from $v$ to all $k$ vertices of degree $k-1$.
\end{proof}

As in the case of Theorem~\ref{main1}, for every $k\ge 3$ and $n\ge 3k+4$, we
can modify our constructions to get connected $k$-regular graphs on $n$
vertices that have no Hamiltonian path (provided that $k$ and $n$ are not both
odd).

\end{document}